\newtheorem{assumption}{Assumption}
\newtheorem{lemma}{Lemma}
\newtheorem{theorem}{Theorem}
\begin{document}

\title{Distributed Stochastic Nonsmooth Nonconvex Optimization}

\author{Vyacheslav Kungurtsev 
  \thanks{Kungurtsev is with the Department of Computer Science, Faculty of Electrical Engineering, Czech Technical University in Prague, Prague, Czech Republic. His work was supported by
  the OP VVV project
CZ.02.1.01/0.0/0.0/16 019/0000765 ``Research Center for Informatics''}%
}

\maketitle

\begin{abstract}
Distributed consensus optimization has received considerable attention in recent years; several distributed consensus-based  algorithms have been proposed for (nonsmooth) convex and (smooth) nonconvex objective functions. However, the behavior of these distributed algorithms on {\it nonconvex, nonsmooth and stochastic} objective functions is not understood. This class of functions and distributed setting are motivated by several applications, including problems in machine learning and signal processing. 

 This paper
presents the first convergence analysis of the   decentralized stochastic subgradient method for 
such classes of problems, over networks modeled as undirected, fixed, graphs.



\end{abstract}

\begin{IEEEkeywords}
 Distributed Subgradient Methods, Nonsmooth Optimization,  Nonconvex  Optimization, Optimization for Machine Learning.  
\end{IEEEkeywords}

\section{Introduction}
We consider the following  nonsmooth unconstrained nonconvex optimization problem over a network of $n$ agents:
\begin{equation}\label{eq:P}
\min_{{\boldsymbol{\theta}}\in \mathbb{R}^m}\, F(\boldsymbol{\theta}):= \sum_{i=1}^n f_i(\boldsymbol{\theta}),
\end{equation}
where 
$f_i:\mathbb{R}^m\to\mathbb{R}$ is the cost function of agent $i$,   known only to agent $i$. We make no assumptions about the smoothness or convexity of  $F$; each $f_i$ is only assumed to be locally Lipschitz continuous, and thus in general it is  nonconvex and nonsmooth.  Furthermore, we assume that agent $i$ does not have access to the (elements of the)   subgradient of its own $f_i$, but only unbiased stochastic estimates of the elements of the 
subgradient are available. 
 Agents are
connected through a communication network, modeled as a  connected, undirected  graph. No specific topology  is assumed for the graph (such as star or hierarchical structure). In this setting, agents seek  to cooperatively solve
Problem ~(\ref{eq:P})   by exchanging information with their immediate neighbors in the network.

 This class of problems and distributed setting arises naturally from many applications in different fields, including  signal processing, statistical data
analysis, machine learning, and engineering. For instance data may be collected and stored   across different nodes and networks;
and loss functions, regularizers, or risk measures that are nonsmooth are increasingly utilized
in statistical data analysis~\cite{loh2017support,rockafellar2000optimization}.
Alternatively, consider training Deep Neural Net architectures on data existing at different centers that may
communicate across a network, or using distributed memory parallel architectures with high latency. 
These problems often involve large volumes of data and result in a loss function that is the 
finite sum of typically nonsmooth functions, due to the presence of rectified linear units, max-pooling, and
other activations, or nonsmooth loss functions~\cite{goodfellow2016deep}. 
Clusters incorporating CPU cores each with its
own distributed memory are common tools available to solve such problems~\cite{leighton2014introduction}. 
Each worker (e.g., core) has access to its own storage of memory, and can communicate data to other workers as needed. \vspace{-0.2cm}

\subsection{Related works}
We are not aware of any result on  the convergence of decentralized schemes for stochastic, nonsmooth, nonconvex
problems in the form (\ref{eq:P}).  
There is a vast literature on distributed algorithms for deterministic (nonsmooth) convex problems; see, e.g., the tutorial papers  \cite{nedic-procIEEE} \cite[Ch. 2 \& 3]{book-CIME}, the earlier works  \cite{Nedic-Ozdaglar-Parrilo_TAC10,Nedic-Olshevsky_TAC2015},  and references therein.   Distributed methods for  {\it nonconvex}    optimization  have also received   attention     \cite{NEXT,scutari-sun19,DoF,pmlr-v70-hong17a,WYin_ncvxDGD_TSP}. The schemes in \cite{DoF,pmlr-v70-hong17a,WYin_ncvxDGD_TSP} are applicable to unconstrained {\it smooth} nonconvex optimization, with  \cite{DoF} handling also compact constraints while   \cite{NEXT,scutari-sun19} can handle   objectives  with additive   {\it nonsmooth convex}  functions. Distributed algorithms for {\it stochastic} optimization problems over networks were proposed in
\cite{ram2010distributed,bianchi2013convergence,Bianchi_TIT_2013,Tatarenko2015}; 
we group these papers as follows. The work \cite{ram2010distributed}   studied  the effects of stochastic subgradient errors on the convergence of   the distributed gradient projection algorithm  \cite{Nedic-Ozdaglar-Parrilo_TAC10} applied to {\it convex, smooth}, constrained optimization over undirected graphs.  
 A distributed projected stochastic gradient algorithm (resp. distributed stochastic approximation algorithm) involving random gossip between agents and decreasing stepsize was studied in \cite{bianchi2013convergence} for {\it nonconvex, smooth}, constrained optimization (resp. \cite{Bianchi_TIT_2013}); to deal with the nonconvexity of the objective, the analysis in \cite{bianchi2013convergence} relies    on  stochastic approximation techniques introduced in \cite{benaim2005stochastic}.   Finally, \cite{Tatarenko2015}  studied the effect of additive i.i.d. noise to the iterates of the  push-sum gradient algorithm \cite{Nedic-Olshevsky_TAC2015} applied to (deterministic ) {\it nonconvex, smooth} optimization over digraphs.   \vspace{-0.3cm}

\subsection{Contributions}
In this work, we introduce the first provably convergent distributed stochastic subgradient method   solving Problem~(\ref{eq:P}), over undirected graphs.  The proposed algorithm can be considered as an extension 
of~\cite{bianchi2013convergence}, in presenting the same setting of stochastic approximation for modeling
the sequence of iterates, however, with the objective function
not assemed to be continuously differentiable.

\section{Assumptions and Preliminaries}

\subsection{System model}

We will assume that $F(\boldsymbol{\theta})$ is continuous and subdifferentially
regular~\cite{rockafellar2009variational}. We shall refer to the subgradient operator $\partial f(\cdot)$
of any regular function $f(\cdot)$ as the Clarke subgradient~\cite{clarke1990optimization}, defined,
at a point $\bar x$, as the convex hull of,
\[
\left\{\frac{f(x_k)-f(\bar x)}{x_k-\bar x},\, x_k\to \bar x\right\}
\]
We note that by Rademacher's Theorem~\cite{rockafellar2009variational} it holds that a subdifferentially
regular function is continuously differentiable almost everywhere. Thus it can be said that $F(\boldsymbol{\theta})$
is equal to a selection of one of a possibly infinite set of continuously differentiable functions.

The communication network of the agent is modeled as a fixed undirected graph $\mathcal{G}:= (\mathcal{V},\mathcal{E})$ with 
vertices $\mathcal{V}\triangleq \{1,..,I\}$ and $\mathcal{E}:=\{(i,j)|i,j\in\mathcal{V}\}$ representing the agents and
communication links, respectively. We assume that the graph $\mathcal{G}$ is strongly connected. 

Each agent $i$ has access to and controls an estimate of the primal variables $\mathbf{x}_{(i)}^\nu$. 
We define the graph matrix $\mathbf{L}=\mathbf{I}-\mathbf{W}$ where $\mathbf{W}=\mathbf{A}\otimes \mathbf{I}$ with
$\mathbf{A}$ satisfying $\mathbf{A}_{ij}>0$ for $i\neq j$ 
if $(i,j)\in\mathcal{E}$ and $\mathbf{A}_{ij}=0$ otherwise.

We assume that $\mathbf{L}$ is double stochastic. The eigenvalues of $\mathbf{L}$ are real and can be sorted in a
nonincreasing order $1=\lambda_1(\mathbf{L})\ge \lambda_2(\mathbf{L})\ge ...\ge \lambda_n(\mathbf{L})\ge -1$. 

Defining,
\[
\beta\triangleq \max\{|\lambda_2(\mathbf{L})|,|\lambda_n(\mathbf{L})|\}
\]
we shall make the following assumption,
\begin{assumption}\label{as:beta}
It holds that,
\[
\beta<1
\]
\end{assumption}

In addition, we assume that each agent $i$ does not have access to the entire subgradient of its function,
i.e., $\partial f_i(\mathbf{x}_{(i)})$, but only has access to a stochastic oracle estimating some
element of this set. In particular, we assume the following regarding any noisy subgradient estimates
$\mathbf{y}_{(i)}$ evaluated at $\mathbf{x}_{(i)}$,
\begin{assumption}\label{as:noise}
Each agent $i$ can has access to an oracle that returns $\mathbf{y}_{(i)}$ which may be written as,
\[
\mathbf{y}_{(i)}= g_{(i)}+\delta M_{(i)}
\]
where $g_{(i)}\in \partial f_i(\mathbf{x}_{(i)})$ and $\delta M_{(i)}$ is a Martingale difference
stochastic noise, and,
\begin{itemize}
\item $\mathbb{E}\left[\delta M_{(i)}\right]=0$
\item $\mathbb{E}\left[\|\delta M_{(i)}\|^2\right]\le R$
\item For every realization $\|\mathbf{y}^\nu_{(i)}\|\le B$.
\end{itemize}
where $R,B\in\mathbb{R}^+$ are some constants.
\end{assumption}

Finally we make an assumption about the structure of the points of nonsmoothness. In particular,
we consider that each $f_i$ is defined to be the maximum of a set of smooth functions. Furthermore
the set of activities, i.e., the \emph{active} smooth function corresponding to the value of $f_i(\cdot)$
at $x$ does not significantly change across $x$ in neighborhoods of arbitrarily small size for almost all $\mathbf{x}$. 
It can be seen
that this assumption holds for the standard problems arising in estimation and data science.

\begin{assumption} \label{as:nonsmoothactive}
Each $f_i(\cdot)$ can be defined as,
\begin{equation}\label{eq:defnactive}
f_i(\mathbf{x}) = \max_{j\in \mathcal{C}_i} f_{i,j} (\mathbf{x}).
\end{equation}
It holds that $f_{i,j}$ has Lipschitz continuous first derivatives, and the Lipschitz constants
across all $i$ and $j$ are bounded uniformly by $L$.

Define $\mathcal{A}_{i}(\mathbf{x}):= \{j\in\mathcal{C}_i:f_i(\mathbf{x})=f_{i,j}(\mathbf{x})$.
For each $i$ and every $\mathbf{x}$, either,
\begin{itemize}
\item $\forall \mathbf{x}_j\to \mathbf{x}\text{ with }\mathbf{x}_j\neq \mathbf{x}$, $\mathcal{A}_{i}(\mathbf{x}) \neq \mathcal{A}_{i}(\mathbf{x}_j)$,
or,
\item $\exists D$ such that for all $\mathbf{x}$, for all $\mathbf{z}\in \mathcal{B}_o(\mathbf{x},D)$, 
it holds that $\mathcal{A}_{i}(\mathbf{x})=\mathcal{A}_{i}(\mathbf{z})$, where $\mathcal{B}_o(\mathbf{x},D)$ is the open ball centered at $\mathbf{x}$ with radius $D$.
\end{itemize}
The assumption implies, in particular that there exists a set $\mathcal{Z}_i$ of zero measure with respect to
$\mathbb{R}^n$ such that all the points satisfying the first condition are contained in $\mathcal{Z}_i$.
\end{assumption}

\subsection{Some Examples}
Consider training a deep neural network, which results in an objective function that is a composition of nested 
functions and activations, with a sum additive loss function at the final exterior, 
with training data $\mathbf{z}$, e.g.,, 
\[
\begin{array}{l}
F(\boldsymbol{\theta}) = l(\phi(\boldsymbol{\theta},\mathbf{z}),\mathbf{z}),\\
\phi(\boldsymbol{\theta},\mathbf{z}) = \phi_1(\phi_2
(\phi_4(\boldsymbol{\theta},\mathbf{z}),\phi_5(\boldsymbol{\theta},\mathbf{z})),\phi_3(\boldsymbol{\theta},\mathbf{z}))
\end{array}\]
where, for instance, $l$ could be an $l1$ loss, e.g., 
$l(\phi(\boldsymbol{\theta},\mathbf{z}),\mathbf{z}) = \|\phi(\boldsymbol{\theta},\mathbf{z})-\mathbf{z}\|_1$,
$\phi_3$ and $\phi_4$ could be sigmoids, i.e., $\phi_j(\boldsymbol{\theta},\mathbf{z})=\frac{1}{1+e^{-[\boldsymbol{\theta}]_J\cdot[\mathbf{z}]_J}}$, where
we use the subscript $[\cdot]_J$ to indicate the components in the index set $J$ of the vector inside,
$\phi_5$ a Rectified Linear Unit, i.e., $\phi_5(x) = \max(0,[\boldsymbol{\theta}]_J\cdot[\mathbf{z}]_J)$
and $\phi_2(\phi_3,\phi_4)=\max(\phi_3,\phi_4)$. Notice the function is summable, but clearly
nonconvex and nonsmooth, and also non-separable in variables (thus presenting no viable closed form prox solution).

Other examples of nonconvex nonsmooth functions can be found in, e.g.,~\cite{davis2019stochastic}. They include
robust phase retrieval, covariance matrix estimation, blind devonvolution, sparse PCA and conditional
value at risk. Note that all but the last one are immediately given as a sum of functions across 
data, thus if data is distributed across a network the setting applies. Conditional value at risk,
if evaluated with sample average approximation, with the data on the different samples distributed,
also becomes a summable distributed optimization problem.

\section{Preliminaries and Algorithm}
Define $\mathbf{x}$ to be the stack of vectors $\{\mathbf{x}_{(i)}\}$ and problem,
\begin{equation}
\min_{\mathbf{x} \in \mathbb{R}^{mn}} \, F_d(\mathbf{x})= \sum_{i=1}^n f_i(\mathbf{x}_{(i)}), 
\end{equation}
to be an auxillary optimization problem to facilitate the analysis of solving problem~(\ref{eq:P}).




We present the Algorithm for this paper as Algorithm~\ref{alg}. 
  \begin{algorithm}[t]
	\caption{Stochastic Distributed Optimization}
	\label{alg}
	\begin{algorithmic}
\STATE{\textbf{Initialization:}} $\mathbf{x}^0_{(1)}=\mathbf{x}^0_{(2)}=...=\mathbf{x}^0_{(n)}\in\mathbb{R}^m$. Set $\nu=0$.
		\WHILE{a termination criterion is not met, each agent does:}
\STATE{Obtain a noisy subgradient estimate $\mathbf{y}^\nu_{(i)}\approx g^\nu_{(i)}\in \partial f_i(\mathbf{x}^\nu_{(i)})$}
\STATE{Let,
\begin{equation}\label{eq:algstep}
\mathbf{x}^{\nu+1}_{(i)} = (1-w_{ii})\left(\mathbf{x}^\nu_{(i)}-\gamma^\nu \mathbf{y}^\nu_{(i)}\right)+\sum_{j\in\mathcal{N}_i} w_{ij}\left(\mathbf{x}^\nu_{(j)}-\gamma^\nu \mathbf{y}^\nu_{(i)}\right),
\end{equation}
}
\STATE{$\nu=\nu+1$}
\ENDWHILE
		\RETURN $\mathbf{x}^k$
	\end{algorithmic}
\end{algorithm}

The primary step of the algorithm, given by~(\ref{eq:algstep}) can be also given as,
\begin{equation}\label{eq:algstepbig}
\mathbf{x}^{\nu+1} =  \left((\mathbf{I}_n-\mathbf{W}) \otimes \mathbf{I}_m\right)\left(\mathbf{x}^\nu-\gamma^\nu  \mathbf{y}^\nu\right).
\end{equation}

We make the following assumption on the step size,
\begin{assumption}\label{as:stepsize}
The stepsize sequence $\{\gamma^\nu\}$ satisfies,
\begin{enumerate}
\item $\sum_{\nu=0}^\infty \gamma^\nu = \infty$
\item $\sum_{\nu=0}^\infty \left(\gamma^\nu\right)^2 < \infty$
\end{enumerate}
\end{assumption}

The proof is structured as follows,
\begin{enumerate}
    \item First we shall show that with probability one, the algorithm achieves
    consensus, in particular, each agent's estimate of the iterates approaches the
    mean of the estimates. This result is the same as in~\cite{bianchi2013convergence}.
    \item Next we define a differential inclusion (DI) whose equilibrium points correspond to
    stationary points of~\eqref{eq:P}. We show that that the mean iterate follows
    a stochastic process defined as a particular perturbed stochastic approximation to 
    the flow defined by this differential inclusion. 
    \item Using a result in~\cite{kushner2003stochastic}, we conclude that this approximation
    converges to an invariant set of the DI. 
    \item Finally, applying recent results relating invariant sets of DIs to local minimizers
    of corresponding nonsmooth optimization problems, we conclude that the mean of the iterates
    converges to a stationary point of ~\eqref{eq:P}
\end{enumerate}

We use the theory of \emph{stochastic approximation and perturbed differential inclusions}
as developed in~\cite{kushner2003stochastic,borkar2009stochastic} and refined for nonsmooth problems
in~\cite{benaim2005stochastic}.

We shall define the following terminology, arising in, for instance~\cite{benaim2005stochastic}.

Consider a differential inclusion,
\begin{equation}\label{eq:adia}
\mathbf{x}(t)\in -G(\mathbf{x}(t))
\end{equation}

A set $A$ is said to be \emph{internally chain transitive} if for any two elements 
$\mathbf{z}_1,\mathbf{z}_2\in A$ and any $\epsilon>0$ and $T>0$, there exists an integer $n\in \mathbb{N}$,
solutions $\mathbf{x}_1,...,\mathbf{x}_n$ to~\eqref{eq:adia} and $t_1,...,t_n>T$ with a) $\mathbf{x}_i(s)\in A$
for all $0\le s\le t_i$, all $i\in\{1,...,n\}$, b) $\|x_i(t_i)-x_{i+1}(0)\|\le \epsilon$ for all 
$i\in\{1,...,n-1\}$, and c) $\|\mathbf{x}_1(0)-\mathbf{z}_1\|\le \epsilon$ and $\|\mathbf{x}_i(t_i)-\mathbf{z}_2\|\le \epsilon$.




\section{Convergence Analysis}
We define the mean iterate to be,
\[
\bar{\mathbf{x}}^k = (\mathbf{1}_n\mathbf{1}_n^T\otimes \mathbf{I}_m  ) \mathbf{x}^k
\]

We first present a necessary standing assumption for this section.

\begin{assumption}\label{as:bound}
For every agent $i$, the iterates $\mathbf{x}^{\nu}_{(i)}$ are bounded almost surely. 
\end{assumption}

Alternatively, one can introduce a compact set on which the iterates are constrained to lie in.

\subsection{Consensus}
\begin{lemma}\label{lem:cons}
The iterates reach consensus, i.e., for all $i$,
\[
\lim_{k\to\infty}\left\|\bar{\mathbf{x}}^k-\mathbf{x}^k_{(i)}\right\|=0
\]
\end{lemma}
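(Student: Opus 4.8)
The plan is to track the disagreement (consensus-error) vector and show it obeys a contractive recursion driven only by the uniformly bounded stochastic subgradients. Write the averaging/projection operator as $\mathbf{P} = \tfrac{1}{n}(\mathbf{1}_n\mathbf{1}_n^T\otimes\mathbf{I}_m)$, and let $\mathbf{P}_\perp = \mathbf{I}_{mn}-\mathbf{P}$ be the projection onto its orthogonal complement, the disagreement subspace; up to the normalizing constant, $\bar{\mathbf{x}}^k$ is $\mathbf{P}\mathbf{x}^k$. Denote by $\mathbf{M}$ the mixing operator $(\mathbf{I}_n-\mathbf{W})\otimes\mathbf{I}_m$ appearing in~(\ref{eq:algstepbig}). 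Because $\mathbf{M}$ is symmetric and doubly stochastic with $\mathbf{1}_n$ spanning its eigenspace for the eigenvalue $1$, it commutes with both projections, fixes the consensus subspace ($\mathbf{M}\mathbf{P}=\mathbf{P}\mathbf{M}=\mathbf{P}$), and, by Assumption~\ref{as:beta}, contracts the complementary subspace: $\|\mathbf{M}\mathbf{P}_\perp\|_2\le\beta<1$. Establishing this commutation/contraction from the stated spectral ordering is the one structural fact the whole argument rests on.

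First I would apply $\mathbf{P}_\perp$ to the compact update~(\ref{eq:algstepbig}). Using $\mathbf{P}_\perp\mathbf{M}=\mathbf{M}\mathbf{P}_\perp$ gives
\[
\mathbf{P}_\perp\mathbf{x}^{k+1}=\mathbf{M}\,\mathbf{P}_\perp\!\left(\mathbf{x}^k-\gamma^k\mathbf{y}^k\right),
\]
so that, writing $e^k:=\|\mathbf{P}_\perp\mathbf{x}^k\|$,
\[
e^{k+1}\le\beta\,e^k+\beta\,\gamma^k\,\|\mathbf{P}_\perp\mathbf{y}^k\|\le\beta\,e^k+\beta\sqrt{n}\,B\,\gamma^k,
\]
where the last inequality uses the per-realization bound $\|\mathbf{y}^k_{(i)}\|\le B$ from Assumption~\ref{as:noise}, which yields $\|\mathbf{P}_\perp\mathbf{y}^k\|\le\|\mathbf{y}^k\|\le\sqrt{n}B$. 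The key point here is that the noise enters only through the uniformly bounded oracle output, so this recursion holds pathwise and no martingale machinery is needed at this stage; that is precisely why the result coincides with the one in~\cite{bianchi2013convergence}.

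Next I would unroll the recursion. Since Algorithm~\ref{alg} is initialized at consensus, $e^0=0$, and iterating gives the convolution bound
\[
e^k\le\sqrt{n}\,B\sum_{j=0}^{k-1}\beta^{\,k-j}\gamma^j.
\]
It remains to show the right-hand side vanishes. From part~2 of Assumption~\ref{as:stepsize} we have $\gamma^j\to0$, and I would invoke the elementary fact that the convolution of a summable geometric sequence with a null sequence is null: given $\epsilon>0$, pick $J$ with $\gamma^j<\epsilon$ for $j\ge J$, split the sum at $J$, bound the tail by $\epsilon\,\beta/(1-\beta)$ and the finite head by $\beta^{\,k-J+1}\sum_{j<J}\gamma^j\to0$. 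Hence $e^k\to0$, and since each block's deviation $\|\bar{\mathbf{x}}^k-\mathbf{x}^k_{(i)}\|$ is dominated by $e^k$, the claim follows.

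I expect the main obstacle to be bookkeeping rather than conceptual: reconciling the mixing-matrix notation ($\mathbf{L}$, $\mathbf{W}=\mathbf{A}\otimes\mathbf{I}$, and $(\mathbf{I}_n-\mathbf{W})\otimes\mathbf{I}_m$) so that the commutation $\mathbf{M}\mathbf{P}=\mathbf{P}$ and the contraction $\|\mathbf{M}\mathbf{P}_\perp\|_2\le\beta$ are rigorously justified from Assumption~\ref{as:beta}, and verifying that the normalization in the definition of $\bar{\mathbf{x}}^k$ is consistent with $\mathbf{P}$. The limiting step is standard, and everything else is a one-line contraction once the spectral picture is pinned down.
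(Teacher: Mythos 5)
Your argument is correct, and it is in fact more than the paper provides: the paper's entire proof of Lemma~\ref{lem:cons} is the single line ``Same as in~\cite[Lemma 1]{bianchi2013convergence},'' so you have supplied a self-contained derivation where the paper only defers to a citation. The cited result is stated for \emph{random} gossip matrices, so the argument there must pass through conditional expectations of the mixing operator; in the present fixed, doubly stochastic setting your pathwise contraction $e^{k+1}\le\beta e^k+\beta\sqrt{n}B\gamma^k$ is exactly the right simplification, and your closing observation that the uniform bound $\|\mathbf{y}^\nu_{(i)}\|\le B$ from Assumption~\ref{as:noise} makes any martingale machinery unnecessary is the correct reason the deterministic route works. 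Two small points to nail down, both of which you already flag: (i) the identity $\|\mathbf{M}\mathbf{P}_\perp\|_2=\beta$ requires $\mathbf{M}$ to be symmetric (equivalently, normal), not merely doubly stochastic with real eigenvalues as literally stated in the paper; since the graph is undirected this is the intended reading, but if one only assumes a real spectrum one must instead work with the spectral radius and a bound of the form $\|(\mathbf{M}-\mathbf{P})^j\|\le C\rho^j$ in the unrolled sum, which changes nothing essential; (ii) the paper's definition of $\bar{\mathbf{x}}^k$ omits the $1/n$ normalization, which is clearly a typo, and your identification of $\bar{\mathbf{x}}^k$ with $\mathbf{P}\mathbf{x}^k$ is the consistent choice (it is also what makes the averaged update~\eqref{eq:update} correct). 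With those clarifications your proof is complete and could stand in place of the paper's citation.
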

\begin{proof}
Same as in~\cite[Lemma 1]{bianchi2013convergence}.
\end{proof}

\subsection{Differential Inclusion and Stochastic Approximation}
Let $G(\boldsymbol{\theta})=\partial F(\boldsymbol{\theta})$. The differential flow defined for the sequential subgradient method
for minimizing $F(\boldsymbol{\theta})$ with arbitrarily small stepsizes is given by,
\begin{equation}\label{eq:di}
\boldsymbol{\theta}(t)\in - G(\boldsymbol{\theta})
\end{equation}

The update to $\bar{\mathbf{x}}$ is given by,
\begin{equation}\label{eq:update}
\begin{array}{l}
\bar{\mathbf{x}}^{k+1} = \left(\mathbf{1}\mathbf{1}^T \otimes \mathbf{I}\right)\left((\mathbf{I}-\mathbf{W}) \otimes \mathbf{I}\right)\left(\mathbf{x}^\nu-\gamma^\nu  \mathbf{y}\right) \\
\qquad = \bar{\mathbf{x}}^{k}-\frac{1}{m}\sum_{i=1}^m \left(g_{i,k} (\mathbf{x}^k_{(i)})+\delta M_{i,k}\right)
\end{array}
\end{equation}
where 
\[
g_{i,k}(\mathbf{x}_{(i)})\in \left\{\nabla f_{i,j} (\mathbf{x}_{(i)}):\, j\in \mathcal{A}_i(\mathbf{x}^k_{(i)})\right\}
\]
holds almost surely by Assumption~\ref{as:nonsmoothactive}.

Let us define,
\[
\begin{array}{l}
\bar{\mathbf{Y}}^k = \frac{1}{m}\sum_{i=1}^m \left(g_{i,k} (\mathbf{x}^k_{(i)})+\delta M_{i,k}\right) \\ \qquad = 
\frac{1}{m}\sum_{i=1}^m \left(g_{i,k} (\bar{\mathbf{x}}^k)-g_{i,k} (\bar{\mathbf{x}}^k)+g_{i,k} (\mathbf{x}^k_{(i)})+\delta M_{i,k}\right) \\
\qquad = 
g_k(\bar{\mathbf{x}}^k) +\frac{1}{m}\sum_{i=1}^m \left(g_{i,k} (\mathbf{x}^k_{(i)})-g_{i,k} (\bar{\mathbf{x}}^k)\right)+\delta M_k \\ 
\qquad = 
g_k(\bar{\mathbf{x}}^k) +\beta_k+\delta M_k
\end{array}
\]
Let $m(t)$ be the smallest integer greater than $t$. Let 
\[
M^0(t) = \sum_{k=0}^{m(t)-1} \gamma^k \delta M_k 
\]
and
\[
B^0(t) = \sum_{k=0}^{m(t)-1} \gamma^k \beta_k 
\]

\subsection{Convergence}
We recall the following Theorem, arising as~\cite[Theorem 5.6.3]{kushner2003stochastic}
\begin{theorem}\label{th:kush}
Given a stochastic process,
\[
\begin{array}{l}
\mathbf{X}^{k+1} = \mathbf{X}^k+\gamma^k \mathbf{Y}^k \\
\mathbf{Y}^k = -g_k(\mathbf{X}^k)+\beta_k+\delta M_k
\end{array}
\]
Define $M^0$ and $B^0$ as above.

Assume,
\begin{itemize}
\item $\mathbb{E}[\mathbf{Y}^k]<\infty$
\item 
\[
\begin{array}{l}
\lim_{k}\sup_{j\ge k} \max_{0\le t\le T} \left\|M^0(jT+t)-M^0(jT)\right\|=0,\\ \text{ and,} \\
\lim_{k}\sup_{j\ge k} \max_{0\le t\le T} \left\|B^0(jT+t)-B^0(jT)\right\|=0 \\ \text{ with probability one.} 
\end{array}
\]
\item
\[
\lim_{\Delta\to 0} \limsup_{k} \sup_{m(t_k+\Delta)\ge j\ge k} \frac{|\gamma^j-\gamma^k|}{\gamma^k}=0
\]
\item $g_k(\mathbf{X})$ is continuous, $G(\mathbf{X})$ is upper semicontinuous and,
\[
\lim_{k,j\to \infty} \text{dist} \left\{\frac{1}{j} \sum_{l=k}^{k+j-1} g_l(\mathbf{X}), G(\mathbf{X})\right\} = 0
\]
\item $\mathbf{X}^k$ is bounded with probability one.
\end{itemize}
Then almost surely, limits of convergent subsequences of $\mathbf{X}^k$ are trajectories of the
differential inclusion,
\[
\dot{\mathbf{X}} \in -G(\mathbf{X})
\]
in some bounded internally chain transitive set and $\mathbf{X}^k$ converges to this
set. 
\end{theorem}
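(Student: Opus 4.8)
The plan is to establish the result by the \emph{ODE method}, more precisely its differential-inclusion refinement, following the strategy of~\cite{kushner2003stochastic,benaim2005stochastic}. The first step is to pass from the discrete recursion to continuous time. Setting $t_0=0$ and $t_k=\sum_{j=0}^{k-1}\gamma^j$ (which tends to $+\infty$ because $\sum_j\gamma^j=\infty$), I define the interpolated process $\mathbf{X}^0(\cdot)$ by $\mathbf{X}^0(t_k)=\mathbf{X}^k$, extended by constant interpolation on each interval $[t_k,t_{k+1})$. The whole argument then reduces to showing that $\mathbf{X}^0(\cdot)$ is an \emph{asymptotic pseudotrajectory} of the flow induced by $\dot{\mathbf{X}}\in -G(\mathbf{X})$: over any fixed window $[0,T]$ the time-shifted curves $\mathbf{X}^0(t_k+\cdot)$ converge uniformly, as $k\to\infty$, to genuine solutions of the differential inclusion.

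Next I would decompose the increment over such a window. For $s\in[0,T]$,
\[
\mathbf{X}^0(t_k+s)-\mathbf{X}^0(t_k)=-\sum \gamma^j g_j(\mathbf{X}^j)+\big(B^0(t_k+s)-B^0(t_k)\big)+\big(M^0(t_k+s)-M^0(t_k)\big),
\]
where the sums run over the indices $j$ with $t_k\le t_j<t_k+s$. The last two brackets vanish uniformly in $s$ as $k\to\infty$ by the two hypotheses on $M^0$ and $B^0$, which disposes of the martingale noise and the linearization bias in one stroke. The remaining drift is handled by the averaging hypothesis $\lim_{k,j}\mathrm{dist}\{\tfrac1j\sum_{l=k}^{k+j-1}g_l(\mathbf{X}),G(\mathbf{X})\}=0$ together with continuity of each $g_k$: because the per-step increments are $O(\gamma^j B)$ and the step-size-ratio hypothesis forces $\gamma^j/\gamma^k\to 1$ across the window, the iterates $\mathbf{X}^j$ stay uniformly close to $\mathbf{X}^0(t_k)$, so the Riemann-type sum $\sum\gamma^j g_j(\mathbf{X}^j)$ is well approximated by $\int_0^s v(r)\,dr$ for a measurable selection $v(r)\in G(\mathbf{X}^0(t_k+r))$.

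Having controlled all three terms, I would invoke Arzel\`a--Ascoli. Boundedness of $\mathbf{X}^k$ (with probability one) gives uniform boundedness of the shifted curves, while the increment estimate above gives equicontinuity on $[0,T]$; hence $\{\mathbf{X}^0(t_k+\cdot)\}_k$ is relatively compact in the space of continuous $\mathbb{R}^{mn}$-valued functions on $[0,T]$ under the uniform topology. Any uniform limit point is absolutely continuous, and passing to the limit in the integral representation—using the upper semicontinuity of $G$ and the convexity and compactness of its values, which hold for the Clarke subdifferential—shows, via the standard convergence theorem for differential inclusions, that the limit solves $\dot{\mathbf{X}}\in -G(\mathbf{X})$. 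This establishes the asymptotic-pseudotrajectory property.

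Finally, a general theorem on asymptotic pseudotrajectories~\cite{benaim2005stochastic}, reproduced in~\cite{kushner2003stochastic}, states that for a bounded pseudotrajectory the limit set $L=\bigcap_{k}\overline{\{\mathbf{X}^j:j\ge k\}}$ is compact, invariant, and \emph{internally chain transitive}, and that $\mathbf{X}^k$ converges to $L$, which is exactly the stated conclusion. I expect the main obstacle to be the drift step: rigorously coupling the averaging hypothesis with the closeness of $\mathbf{X}^j$ to $\mathbf{X}^0(t_k)$ and then passing to a selection of the \emph{set-valued}, merely upper-semicontinuous map $G$. The set-valued, nonsmooth character is precisely what blocks a direct single-valued ODE argument and forces the use of measurable-selection and Filippov-type reasoning together with the convex-compactness of $G(\mathbf{X})$.
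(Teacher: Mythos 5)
The paper does not actually prove this statement: its ``proof'' is a one-line citation to~\cite[Theorems 5.6.3 and 5.2.1]{kushner2003stochastic}, so the theorem is imported wholesale. Your proposal, by contrast, reconstructs the argument behind that cited result, and the skeleton you give --- piecewise-constant interpolation on the timescale $t_k=\sum_{j<k}\gamma^j$, decomposition of the windowed increment into drift plus $B^0$ plus $M^0$ terms, killing the latter two via the stated hypotheses, Arzel\`a--Ascoli on the shifted curves, and passage to the limit in the integral representation using upper semicontinuity and convex-compact values of $G$ to land on the asymptotic-pseudotrajectory property and hence on an internally chain transitive invariant set~\cite{benaim2005stochastic} --- is indeed the standard route and matches what Kushner and Yin do. One step in your sketch is stated too strongly: over a fixed window $[0,T]$ the iterates $\mathbf{X}^j$ do \emph{not} stay uniformly close to $\mathbf{X}^0(t_k)$ (they can drift by $O(TB)$); the averaging hypothesis is instead applied after subdividing $[0,T]$ into subintervals of small length $\Delta$, on each of which the iterate is nearly constant because increments are $O(\gamma^j)$, with the step-size-ratio condition ensuring the number of steps per subinterval is comparable across the window, and then $\Delta\to 0$ at the end. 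With that local-averaging correction your outline is the correct proof of the cited theorem; it adds content the paper omits, at the cost of re-deriving a known result.
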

\begin{proof}
~\cite[Theorem 5.6.3 and 5.2.1]{kushner2003stochastic}
\end{proof}

We now apply this theorem to the process $\bar{\mathbf{x}}^k$ given by~\eqref{eq:update}.

\begin{theorem}\label{th:limitxbar}
Theorem~\ref{th:kush} applies to $\bar{\mathbf{x}}^k$ for the differential inclusion defined by
~\eqref{eq:adia}, i.e., almost surely, limit points of $\bar{\mathbf{x}}^k$  are trajectories
of~\eqref{eq:adia} and $\bar{\mathbf{x}}^k$ converges to an invariant set of this DI.
\end{theorem}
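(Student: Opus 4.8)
The plan is to verify, one hypothesis at a time, the five conditions of Theorem~\ref{th:kush} for the mean-iterate process $\bar{\mathbf{x}}^k$ of~\eqref{eq:update}, under the identification $\mathbf{X}^k = \bar{\mathbf{x}}^k$, drift $g_k(\bar{\mathbf{x}}^k) = \frac{1}{m}\sum_{i=1}^m g_{i,k}(\bar{\mathbf{x}}^k)$, bias $\beta_k = \frac{1}{m}\sum_{i=1}^m\big(g_{i,k}(\mathbf{x}^k_{(i)}) - g_{i,k}(\bar{\mathbf{x}}^k)\big)$, and noise $\delta M_k = \frac{1}{m}\sum_{i=1}^m \delta M_{i,k}$. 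Once all five hold, the theorem yields verbatim the asserted conclusion: limit points of $\bar{\mathbf{x}}^k$ are trajectories of $\dot{\mathbf{x}}\in -G(\mathbf{x})$ and $\bar{\mathbf{x}}^k$ converges to an internally chain transitive invariant set.

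Several conditions are immediate from the standing assumptions. Integrability $\mathbb{E}[\mathbf{Y}^k]<\infty$ and the almost-sure boundedness of $\bar{\mathbf{x}}^k$ follow from the uniform bound $\|\mathbf{y}^\nu_{(i)}\|\le B$ in Assumption~\ref{as:noise} and from Assumption~\ref{as:bound}, since averaging preserves both. The stepsize-regularity requirement $\lim_{\Delta\to 0}\limsup_k \sup_{m(t_k+\Delta)\ge j\ge k} |\gamma^j-\gamma^k|/\gamma^k = 0$ is a standard property of diminishing stepsizes, holding for the usual class $\gamma^\nu=\nu^{-\alpha}$ with $\alpha\in(1/2,1]$ compatible with Assumption~\ref{as:stepsize}. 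For the averaged-drift condition I would use the Clarke sum rule for subdifferentially regular functions together with $g_{i,k}(\mathbf{x})\in\{\nabla f_{i,j}(\mathbf{x}): j\in\mathcal{A}_i(\mathbf{x})\}\subseteq\partial f_i(\mathbf{x})$, giving $g_k(\mathbf{x})\in\frac{1}{m}\sum_{i=1}^m\partial f_i(\mathbf{x})=G(\mathbf{x})$ with $G(\mathbf{x}):=\frac{1}{m}\partial F(\mathbf{x})$ (the factor $1/m$ merely reparametrizes time and leaves the invariant sets of~\eqref{eq:adia} unchanged). Since this set is convex and closed, every Cesàro average $\frac{1}{j}\sum_{l=k}^{k+j-1}g_l(\mathbf{x})$ already lies in it and the required distance is identically zero; upper semicontinuity of $G$ is the standard closed-graph property of Clarke subgradients, while local continuity of the selection $g_k$ off the null set $\mathcal{Z}_i$ comes from the local constancy of $\mathcal{A}_i$ granted by Assumption~\ref{as:nonsmoothactive}.

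The martingale condition on $M^0$ I would dispatch with the $L^2$-martingale convergence theorem. The partial sums $M^0(m(t))=\sum_k \gamma^k\delta M_k$ form a martingale whose increments satisfy $\mathbb{E}[\|\gamma^k\delta M_k\|^2]=O\big((\gamma^k)^2\big)$ by Assumption~\ref{as:noise}, and $\sum_k (\gamma^k)^2<\infty$ by Assumption~\ref{as:stepsize}, so $M^0$ is $L^2$-bounded and converges almost surely. Convergence of the tail then forces the asymptotic rate of change $\sup_{j\ge k}\max_{0\le t\le T}\|M^0(jT+t)-M^0(jT)\|$ to vanish with probability one, which is the Kushner--Clark form required by Theorem~\ref{th:kush}.

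The main obstacle is the bias term $B^0$, where the nonsmoothness genuinely enters; here I would show $\beta_k\to 0$ almost surely and deduce the rate-of-change condition from it. The crux is bounding $\|g_{i,k}(\mathbf{x}^k_{(i)})-g_{i,k}(\bar{\mathbf{x}}^k)\|$, which cannot be controlled by naive Lipschitz continuity of a subgradient selection (discontinuous at kinks); instead one must invoke Assumption~\ref{as:nonsmoothactive}. By Lemma~\ref{lem:cons}, $\|\mathbf{x}^k_{(i)}-\bar{\mathbf{x}}^k\|\to 0$, so for large $k$ the pair $\mathbf{x}^k_{(i)},\bar{\mathbf{x}}^k$ lies in a ball of radius $D$ on which, for $\bar{\mathbf{x}}^k\notin\mathcal{Z}_i$, the active set $\mathcal{A}_i$ is constant; the selection can then be realized as $\nabla f_{i,j}$ for a common active index $j$, and the uniform gradient-Lipschitz bound $L$ gives $\|g_{i,k}(\mathbf{x}^k_{(i)})-g_{i,k}(\bar{\mathbf{x}}^k)\|\le L\|\mathbf{x}^k_{(i)}-\bar{\mathbf{x}}^k\|$. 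Averaging yields $\|\beta_k\|\le L\max_i\|\mathbf{x}^k_{(i)}-\bar{\mathbf{x}}^k\|\to 0$, and since the exceptional points form the null set $\mathcal{Z}_i$, this holds almost surely. A vanishing summand $\beta_k\to 0$ together with the stepsize regularity then renders the weighted tail increments of $B^0(t)=\sum_k\gamma^k\beta_k$ negligible over windows of length $T$, giving the second rate-of-change condition and completing the verification. The delicate point to make fully rigorous is the transition from consensus to a common active index \emph{uniformly in} $i$ while controlling almost-sure avoidance of the sets $\mathcal{Z}_i$, i.e. quantifying how large $k$ must be for every agent to enter the radius-$D$ neighborhood simultaneously.
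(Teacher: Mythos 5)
Your proposal is correct and follows the same skeleton as the paper's proof: identify $\bar{\mathbf{x}}^k$ with the process of Theorem~\ref{th:kush} via the decomposition $\bar{\mathbf{Y}}^k = g_k(\bar{\mathbf{x}}^k)+\beta_k+\delta M_k$, and check the five hypotheses using Assumptions~\ref{as:noise}, \ref{as:bound}, \ref{as:stepsize} and~\ref{as:nonsmoothactive} together with Lemma~\ref{lem:cons}. Three of your verifications diverge from, and in each case sharpen, the paper's. First, you verify the stepsize-regularity condition $\lim_{\Delta\to 0}\limsup_k\sup_j|\gamma^j-\gamma^k|/\gamma^k=0$, which the paper's proof silently omits; you are right that it does not follow from Assumption~\ref{as:stepsize} alone and must be imposed on the stepsize class. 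Second, for the averaged-drift condition the paper argues that the density of iterations at which $\mathcal{A}_i(\bar{\mathbf{x}}^k)\neq\mathcal{A}_i(\mathbf{x}^k_{(i)})$ vanishes, whereas you observe that each $g_l(\mathbf{X})$ lies in the closed convex set $G(\mathbf{X})$ so the Ces\`{a}ro average does too; your route is cleaner, though it quietly assumes the selection index chosen at $\mathbf{x}^l_{(i)}$ is still active at the evaluation point $\mathbf{X}$, which is exactly the issue the paper's density argument is aimed at. Third, and most valuably, you point out that the bound $\|g_{i,k}(\mathbf{x}^k_{(i)})-g_{i,k}(\bar{\mathbf{x}}^k)\|\le L\|\mathbf{x}^k_{(i)}-\bar{\mathbf{x}}^k\|$ cannot come from Lipschitz continuity of a subgradient selection (which is discontinuous at kinks) but requires the local constancy of $\mathcal{A}_i$ on a ball of radius $D$ together with almost-sure avoidance of the null sets $\mathcal{Z}_i$; the paper asserts the bound ``by Assumption~\ref{as:nonsmoothactive}'' without spelling this out, and the uniformity-in-$i$ issue you flag at the end is indeed left unaddressed there as well.
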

\begin{proof}
We shall see that the assumptions of Theorem~\ref{th:kush} are satisfied
for this process. 

It holds that $\mathbb{E}[\mathbf{Y}^k]<\infty$ and $\bar{\mathbf{x}}^k$ are bounded with probability one by assumption.
Furthermore,
\[
\lim_{k}\sup_{j\ge k} \max_{0\le t\le T} \left\|M^0(jT+t)-M^0(jT)\right\|=0 \text{ w.p.}1
\] 
by standard
arguments regarding the Martingale difference noise (see, e.g., the proof of~\cite[Theorem 5.2.1]{kushner2003stochastic}). 

Next we have that, by Assumption~\ref{as:nonsmoothactive} and the definition of $g_{i,k}$,
\[
\|g_{i,k} (\mathbf{x}^k_{(i)})-g_{i,k} (\bar{\mathbf{x}}^k)\|\le L \|\mathbf{x}^k_{(i)}-\bar{\mathbf{x}}^k\|\to 0
\]
for all $i$, thus $\beta_k\to 0$ implying, together with the step-size conditions, that
\[
\lim_{k}\sup_{j\ge k} \max_{0\le t\le T} \left\|B^0(jT+t)-B^0(jT)\right\|=0 \text{ w.p.}1 
\] 

Finally, we know that $g_{i,k}(\cdot)$ are continuous and $G(\cdot)$ is upper semicontinuous by definition.

Now, since $\mathbf{x}^k$ is a stochastic process with nonzero noise for all $k$, it holds that there is a set of
dense probability measure such that for all $k>0$, $\mathbf{x}_{(i)}^k\notin \mathcal{Z}$ and $\bar{\mathbf{x}}^k\notin \mathcal{Z}$.

This implies that for all $i$, since $\|\mathbf{x}^k_{(i)}-\bar{\mathbf{x}}^k\|\to 0$, that Assumption~\ref{as:nonsmoothactive} implies,
\[
\lim_{k,j\to\infty} \frac{\sum_{l=k}^{k+j-1} \mathbf{1}(\mathcal{A}_{i}(\bar{\mathbf{x}}^k)\neq \mathcal{A}_{i}(\mathbf{x}_{(i)}^k))}{j}\to 0
\] 
and thus the fourth condition of the Theorem has been shown, and the results follow.
\end{proof}

\subsection{Properties of Limit Points}
The previous sections showed that asymptotically $\bar{\mathbf{x}}^\nu$ are trajectories of the differential
inclusion~\eqref{eq:di}. The proof of~\cite[Theorem 5.2.1]{kushner2003stochastic} concludes that in the case
of the presence of a compact constraint or an ODE instead of a DI, limit points of the sequence are thus
stationary points of~\eqref{eq:P}. In~\cite[Theorem 4.2]{davis2019stochastic} the argument was extended
for an unconstrained nonsmooth function satisfying certain properties. 

\begin{theorem}\cite[Theorem 4.2]{davis2019stochastic}
If it holds that,
\begin{itemize}
\item The set of stationary points of~\eqref{eq:P} is dense, and
\item For any trajectory $\mathbf{z}(t)$ of the DI~\eqref{eq:di}, it holds that if $\mathbf{z}(0)$ is not
stationary, there exists a $T$ such that $F(\mathbf{z}(t))<F(\mathbf{z}(0))$ for $t\in(0,T]$.
\end{itemize}
then every limit point of $\bar{\mathbf{x}}^\nu$ is critical for $F(\cdot)$ and the function
values $F(\bar{\mathbf{x}}^\nu)$ converge.
\end{theorem}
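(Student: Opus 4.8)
The plan is to use the conclusion of Theorem~\ref{th:limitxbar} as a black box: it already guarantees that $\bar{\mathbf{x}}^\nu$ converges to a bounded internally chain transitive set $\Lambda$ of the differential inclusion~\eqref{eq:di}, and that every limit point is a trajectory of~\eqref{eq:di}. What remains is a purely deterministic statement about such sets, and the natural tool is the Lyapunov-function theory for differential inclusions of~\cite{benaim2005stochastic}. The object that does the work is $F$ itself, which I would argue serves as a strict Lyapunov function for~\eqref{eq:di} relative to the stationary set $S:=\{\boldsymbol{\theta}:\mathbf{0}\in\partial F(\boldsymbol{\theta})\}$.

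First I would establish the Lyapunov property. Because $F$ is regular and $\dot{\boldsymbol{\theta}}(t)\in-\partial F(\boldsymbol{\theta}(t))$, the chain rule for regular Lipschitz functions gives $\frac{d}{dt}F(\boldsymbol{\theta}(t))=-\|\dot{\boldsymbol{\theta}}(t)\|^2\le 0$ for almost every $t$ (using that $-\dot{\boldsymbol{\theta}}(t)\in\partial F(\boldsymbol{\theta}(t))$), so $F$ is nonincreasing along every trajectory. The second hypothesis upgrades this to \emph{strict} decrease off $S$: if $\boldsymbol{\theta}(0)\notin S$ then $F(\boldsymbol{\theta}(t))<F(\boldsymbol{\theta}(0))$ for $t\in(0,T]$, and monotonicity extends the strict inequality to all $t>0$. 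Hence $F$ is continuous, nonincreasing along all trajectories, and strictly decreasing along trajectories that do not start in $S$; this is precisely the definition of a Lyapunov function for $S$ used in~\cite{benaim2005stochastic}.

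Second, I would invoke the characterization of internally chain transitive sets by a Lyapunov function (Proposition~3.27 of~\cite{benaim2005stochastic}, the engine behind~\cite[Theorem 4.2]{davis2019stochastic}): if $F$ is a Lyapunov function for $S$ and the set of critical values $F(S)$ has empty interior, then every internally chain transitive set $\Lambda$ satisfies $\Lambda\subseteq S$ and $F|_\Lambda$ is constant. The first hypothesis is what supplies the missing ingredient, namely the Sard-type property that $F(S)$ has empty interior. Applying the proposition to the limit set $\Lambda$ of $\bar{\mathbf{x}}^\nu$ furnished by Theorem~\ref{th:limitxbar} yields $\Lambda\subseteq S$, so every limit point of $\bar{\mathbf{x}}^\nu$ is critical for $F$.

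Finally, the convergence of function values follows immediately: since $F$ is constant on $\Lambda$, say equal to $c$, and $\bar{\mathbf{x}}^\nu\to\Lambda$ with $F$ continuous, every accumulation value of the real sequence $F(\bar{\mathbf{x}}^\nu)$ equals $c$, whence $F(\bar{\mathbf{x}}^\nu)\to c$. The main obstacle I anticipate lies not in the Lyapunov computation but in rigorously securing the empty-interior condition on $F(S)$: for a general locally Lipschitz $F$ the set of Clarke-critical values can fail to be negligible, so one must genuinely exploit the structural Assumption~\ref{as:nonsmoothactive} — the representation of each $f_i$ as a maximum of smooth functions with essentially locally constant active sets — to obtain a Sard-type theorem, and then carefully reconcile the notion of Lyapunov function in~\cite{benaim2005stochastic} with the internal-chain-transitivity definition stated after~\eqref{eq:adia}.
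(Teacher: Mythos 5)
The paper does not actually prove this statement: it is imported verbatim as \cite[Theorem 4.2]{davis2019stochastic}, and the surrounding text only verifies that the two hypotheses hold under Assumption~\ref{as:nonsmoothactive}. Your argument is, in substance, the proof of the cited theorem itself: combine the conclusion of Theorem~\ref{th:limitxbar} (the limit set $\Lambda$ of $\bar{\mathbf{x}}^\nu$ is a bounded internally chain transitive set of~\eqref{eq:di}) with the Bena\"im--Hofbauer--Sorin characterization of internally chain transitive sets via a Lyapunov function, taking $F$ itself as the Lyapunov function for the stationary set $S=\{\boldsymbol{\theta}:\mathbf{0}\in\partial F(\boldsymbol{\theta})\}$. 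That is exactly the route taken in \cite{davis2019stochastic}, and your descent computation via the chain rule for regular functions is the right way to obtain the nonincreasing property on trajectories starting inside $S$, which the second bullet alone does not give you.

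One step deserves more care than you give it. You write that the first hypothesis ``supplies'' the empty-interior condition on $F(S)$. As literally stated --- ``the set of stationary points of~\eqref{eq:P} is dense'' --- it does no such thing: density of $S$ in $\mathbb{R}^m$ only gives density of $F(S)$ in the range of $F$, which is perfectly consistent with $F(S)$ having nonempty interior; if anything, a dense stationary set makes the Sard-type condition harder, not easier. The hypothesis in \cite{davis2019stochastic} is the ``weak Sard'' condition that the complement of the set of critical \emph{values} $F(S)$ is dense in $\mathbb{R}$ (equivalently, $F(S)$ has empty interior); the bullet in the paper is a misparaphrase of that condition, so the gap lies in the statement rather than in your strategy. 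Your closing remarks show you sensed this: you correctly identify securing the empty-interior property as the real obstacle and point to Assumption~\ref{as:nonsmoothactive} (which yields a Whitney $C^1$-stratification and hence a nonsmooth Sard theorem) as the way to obtain it. With the first hypothesis read as the weak Sard condition, your proof is complete and coincides with the source's.
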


By Assumption~\ref{as:nonsmoothactive} the first condition holds. Second, note that the same Assumption
defines a Whitney $C^1$-stratification of the graph of $F$ and thus by~\cite[Theorem 5.8]{davis2019stochastic}
the second condition holds as well. 

Finally by $\|\mathbf{x}^\nu_{(i)}-\bar{\mathbf{x}}^\nu\|\to 0$ we
have that this convergence theorem holds for the individual iterates as well.

\section{Numerical Results}
We simulated Algorithm~\ref{alg} on training a neural net architecture for the MNIST data set. 
We used a nonsmooth loss function with an $l1$-regularizer, and two layers that included a softmax
and a relu operator, with 100 nodes in the inner layer. Specifically, with $\boldsymbol{\theta} = (w,v,b,c)$ the parameters, 
$A$ the training data and $y$ the labels,
\[
\begin{array}{l}
f_i(\boldsymbol{\theta}) = \left\|\phi(\boldsymbol{\theta},A_i)-y\right\|_1+\lambda\|\boldsymbol{\theta}\|_1 \\
\phi(\boldsymbol{\theta},A)_j = \frac{1}{1+e^{-\sum_k w_{kj} \psi(\boldsymbol{\theta}, A_i)+b_j}}\\
\psi(\boldsymbol{\theta},A)_k = \max\left(0,\sum_l v_{kl} [A_i]_l+c_k\right) 
\end{array}
\]
We ran 8500 iterations using 50 agents with randomly generated connections at 50$\%$ for each potential
edge. Each agent sampled 1$\%$ of its apportioned data set uniformly at each iteration to perform the update.
We use $\alpha=0.1$.

We show the results in Figure~\ref{fig:results}. We see that the iterates appear to be near-consensus.
Furthermore, the norm of the (sub)gradients, evaluated at the average iterate among the agents
is monotonically decreasing, along with the objective value (also evaluated at the average iterate).
Given that diminishing step-sizes must be used, the convergence is slow.

\begin{figure}
\begin{center}
\begin{tabular}{c} 
\includegraphics[scale=0.47]{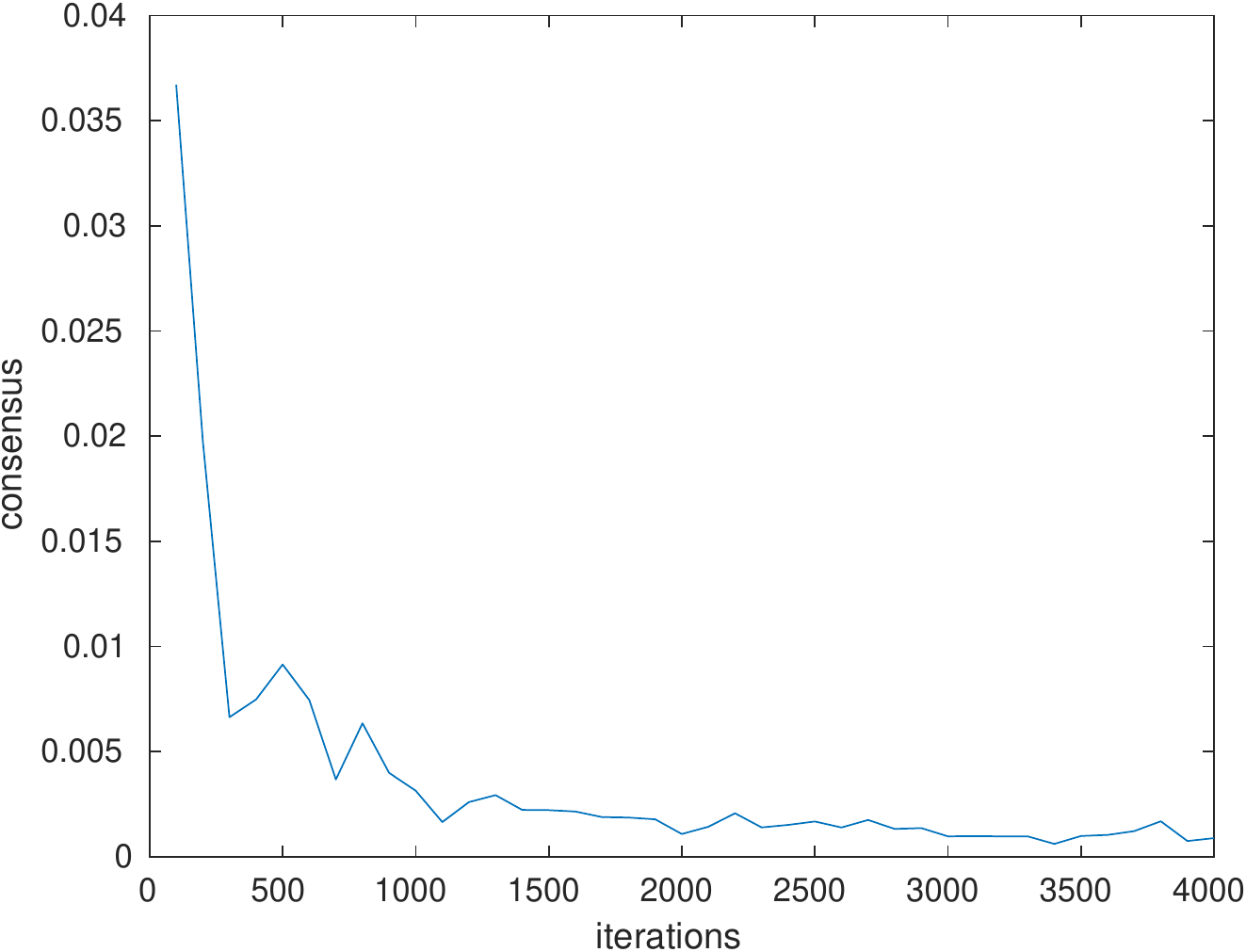}
\\
\includegraphics[scale=0.46]{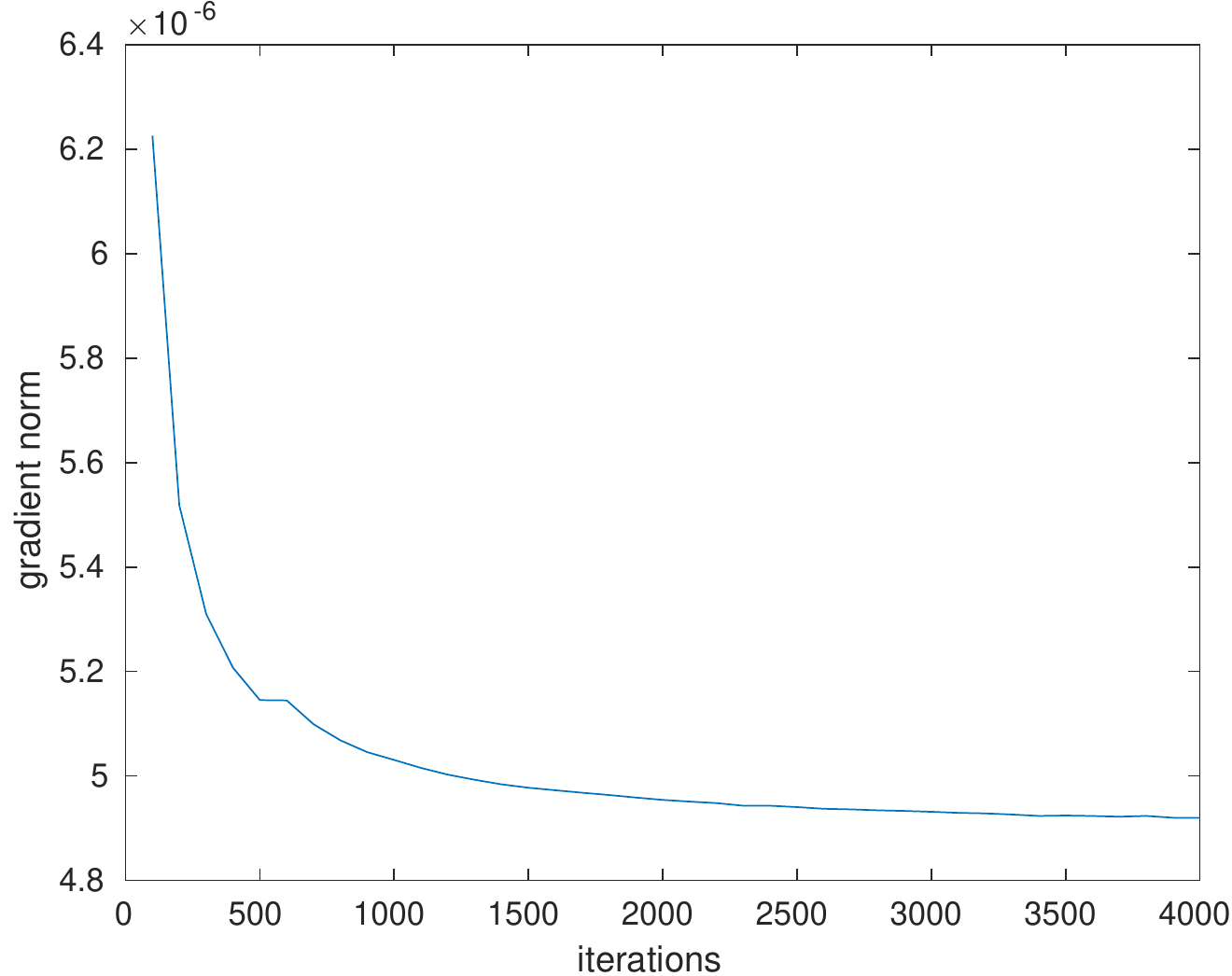}
\\
\includegraphics[scale=0.45]{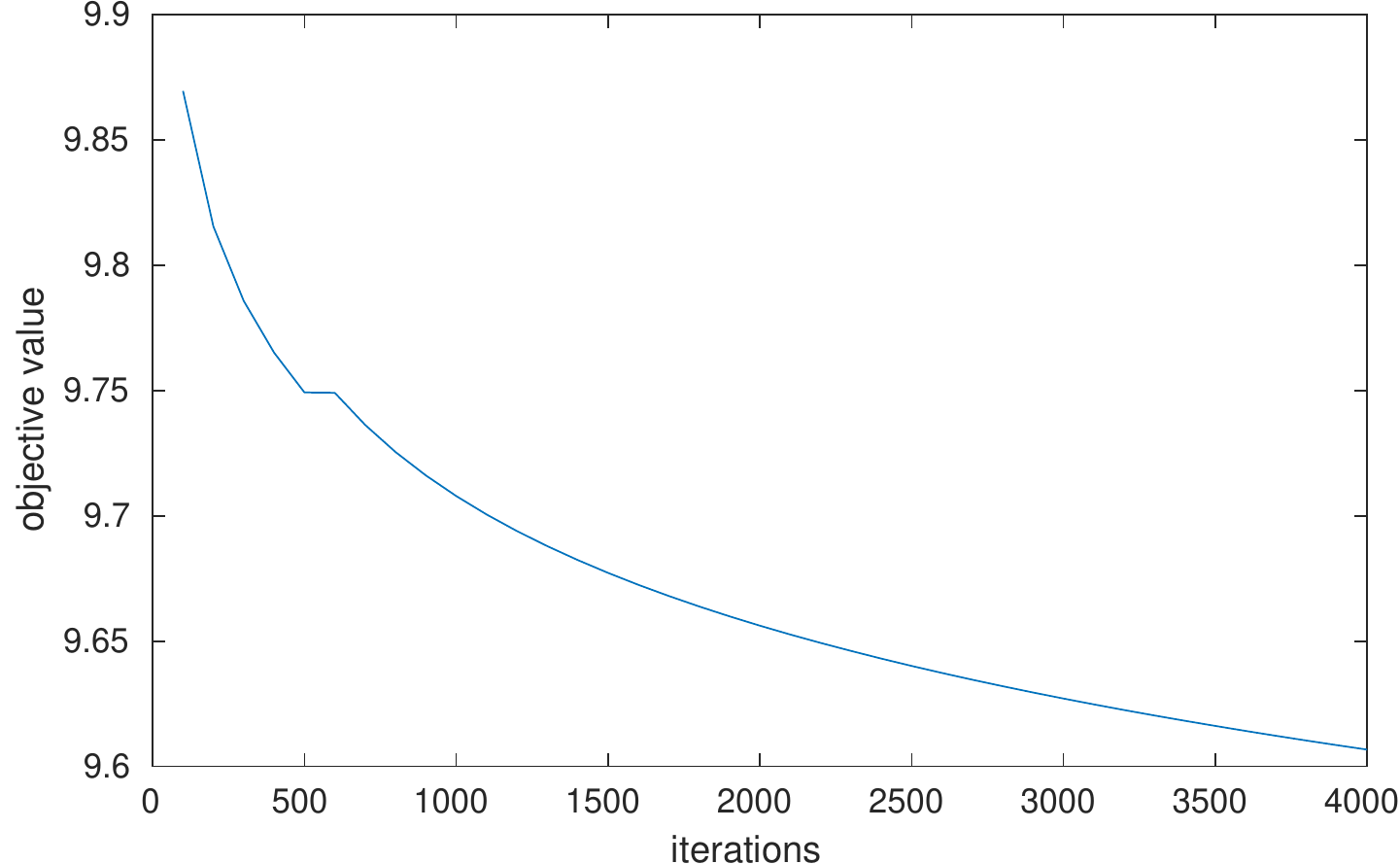}
\end{tabular}
\end{center}
\caption{\label{fig:results} Consensus error, (sub)gradient norm at the average of the iterates, and the objective value per iteration}
\end{figure}

\section{Conclusion}

This paper presents an advancement in the state of the art for analysis of decentralized optimization schemes
in extending the available convergence theory to nonsmooth, nonconvex problems, using stochastic updates.
Using ideas from the control consensus literature and stochastic approximation theory, we were able to
prove convergence of a simple procedure for a standard auxillary problem, and bound the distance
of its solution to a solution of the original problem. We demonstrated the efficacy of the procedure
on a standard example in training DNN architecture. As this just begins the chapter in
the analysis of such problems, there is considerable scope for future research extensions.

\bibliographystyle{IEEEtran}
\bibliography{references}

\end{document}